%
%
%
%
%
%
%
%
%
%

\documentclass[12pt]{amsart}
\usepackage{version}
\usepackage{amssymb}


\setlength{\textheight}{20cm} \textwidth16cm \hoffset=-2truecm

\parskip=\smallskipamount

\newtheorem{theorem}{Theorem}[section]

\newtheorem{lemma}[theorem]{Lemma}
\newtheorem{corollary}[theorem]{Corollary}
\newtheorem{proposition}[theorem]{Proposition}

\theoremstyle{definition}
\newtheorem{definition}[theorem]{Definition}
\newtheorem{example}[theorem]{Example}

\newtheorem{remark}[theorem]{Remark}

\newcommand{\B}{\mathbb{B}}
\newcommand{\C}{\mathbb{C}}
\newcommand{\D}{\mathbb{D}}

\newcommand{\N}{\mathbb{N}}

\newcommand{\R}{\mathbb{R}}

\newcommand{\id}{{\sf id}}

\def\v{\varphi}
\def\de{\partial}

\numberwithin{equation}{section}

%
%
%
%

\begin{document}
\title[embedding into Loewner chains]{Embedding  univalent functions in filtering Loewner chains in higher dimension}
\author[L. Arosio]{Leandro Arosio$^\dag$}
\address{L. Arosio: Dipartimento Di Matematica\\
Universit\`{a} di Roma \textquotedblleft Tor Vergata\textquotedblright\ \\
Via Della Ricerca Scientifica 1, 00133 \\
Roma, Italy} \email{arosio@mat.uniroma2.it}

\author[F. Bracci]{Filippo Bracci$^\dag$}
\address{F. Bracci: Dipartimento Di Matematica\\
Universit\`{a} di Roma \textquotedblleft Tor Vergata\textquotedblright\ \\
Via Della Ricerca Scientifica 1, 00133 \\
Roma, Italy} \email{fbracci@mat.uniroma2.it}

\author[E. F. Wold]{Erlend Forn\ae ss Wold$^{\dag\dag}$}
\address{E. F. Wold: Matematisk Institutt, Universitetet i Oslo,
Postboks 1053 Blindern, 0316 Oslo, Norway}
\email{erlendfw@math.uio.no}

%
%
\subjclass[2000]{32H02; 32T15; 32A30; 30C55}
\date{\today}
\keywords{Loewner theory; embedding problems; univalent functions; polinomially convex domains; Runge domains}

\thanks{$\dag$ Supported by the ERC grant ``HEVO - Holomorphic Evolution Equations'' n. 277691}
\thanks{$^{\dag\dag}$ Supported by the NFR grant 209751/F20}

\begin{abstract}
We discuss the problem of embedding univalent functions into Loewner chains in higher dimension. In particular, we prove that a normalized univalent map of the ball in $\C^n$ whose image is a smooth strongly pseudoconvex domain is embeddable into a normalized Loewner chain (satisfying also some extra regularity properties) if and only if the closure of the image is polynomially convex.
\end{abstract}

\maketitle

\section{Introduction}

Let $\B^n:=\{z\in \C^n: \|z\|^2<1\}$ denote the unit ball of $\C^n$. Let
\[
\mathcal S:=\{f: \B^n \to \C^n: f(0)=0, df_0={\sf id}, f \hbox{ univalent}\}.
\]
Recall that a {\sl normalized Loewner chain} $(f_t)$ is a family of univalent mappings $(f_t:\mathbb B^n\rightarrow\mathbb C^n)_{t\geq 0}$,  with   $\Omega_s:=f_s(\mathbb B^n)\subset f_t(\mathbb B^n)$ for $s\leq t$ and such that $f_t(0)=0$ and $df_0=e^t {\sf id}$ for all $t\geq 0$.
We set  $R(f_t):=\cup_{s\geq 0}\Omega_s\subseteq \mathbb C^n$ and call the \emph{Loewner range} of $(f_t)$ the class of biholomorphism of $R(f_t)$.
A normalized Loewner chain $(f_t)$ has always Loewner range biholomorphic to $\C^n$, but in dimension greater than $1$, due to the existence of Fatou-Bieberbach domains, the open set $R(f_t)$  might be strictly contained in $\C^n$ (see Section \ref{catene} for more details about Loewner chains).

We say that a function $f\in \mathcal S$ {\sl embeds} into a normalized Loewner chain if there exists a normalized Loewner chain $(f_t)$ such that $f_0=f$. We denote by
\[
\mathcal S^1:=\{f\in \mathcal S: f\ \hbox{embeds into a normalized Loewner chain $(f_t)$ with $R(f_t)=\C^n$}\}.
\]

For $n=1$, the class $\mathcal S$ is quite  well understood by means of the Loewner theory (see, {\sl e.g.} \cite{Pommerenke}, \cite{G-K} or the recent survey \cite{Abate}). In fact, every element $f$ of $\mathcal S$ for $n=1$ can be embedded  into a normalized Loewner chain whose range is $\C$ (see \cite[Thm. 6.1]{Pommerenke},  which by itself can be described by a differential equation with a particular driving term.  Thus
 $\mathcal S=\mathcal S^1$ and such a class is also compact (see \cite{Pommerenke}).

In higher dimension, in \cite{K}, \cite{GKH}, I. Graham, H. Hamada and G. Kohr introduced and studied the class $\mathcal S^0$ which is the subclass of $\mathcal S^1$ formed by those $f\in \mathcal S$ which admit a {\sl parametric representation}, namely, such that $f$ can be embedded into a normalized Loewner chain $(f_t)$ with the property that $\{e^{-t} f_t(\cdot)\}_{t\geq 0}$ is a normal family. A geometric characterization of maps in the class $\mathcal S^0$ is in \cite{GKH2}. The class $\mathcal S^0$ is compact and in dimension one it holds $\mathcal S^0=\mathcal S^1=\mathcal S$, while in higher dimension   one has $\mathcal S^0\subsetneq\mathcal S^1$ (see \cite[Section 8]{G-K}).

It is then natural to ask whether $\mathcal{S}=\mathcal{S}^1$  for $n>1$. It turns out that this is not always the case:  since every normalized Loewner chain is a $L^\infty$ Loewner chain in the sense of \cite{Arosio} (see Proposition \ref{image}), as explained in \cite[Section 4]{ArosioBracciWold} it follows from a theorem of Docquier-Grauert \cite{Docquier-Grauert}
that if $f\in \mathcal S^1$ then $f(\mathbb B^n)$ is a Runge domain. Therefore, if $f\in\mathcal S$ is such that
$f(\mathbb B^n)$ is not Runge then $f\not\in\mathcal S^1$ (and normalized univalent maps of the ball with non-Runge image do exist, see Example \ref{example}).  There is, however, the possibility that $f$ embeds into
a Loewner chain whose range is a  non-Runge Fatou-Bieberbach domain in $\mathbb C^n$.   Thus the two following natural questions arise:
\begin{itemize}
\item[Q1)]  Does any $f\in\mathcal S$ with $f(\mathbb B^n)$ Runge embed into a normalized Loewner chain $(f_t)$ with $R(f_t)=\C^n$?
\item[Q2)]  Does any $f\in\mathcal S$ embed into a normalized Loewner chain whose Loewner range is biholomorphic to $\mathbb C^n$?
\end{itemize}

In this note we discuss the two questions and give a partial positive answer to question Q1. Let us denote by $\mathcal S_R:=\{f\in \mathcal S: f(\B^n) \ \hbox{is Runge}\}$. Question Q1 can then be  restated in the following way: is $\mathcal{S}^1=\mathcal{S}_R?$
Using  Anders\'en-Lempert approximation theorem \cite[Thm. 2.1]{AndersenLempert} it is proved in \cite[Theorem 2.3]{S} that
\begin{equation}\label{close}
\overline{\mathcal S^1}=\mathcal S_R,
\end{equation}
where the closure has to be understood in terms of the topology of uniform convergence on compacta of $\B^n$.
However, one can say more: each $f\in\mathcal S_R$ can  be approximated by maps in $\mathcal S$ which can be embedded into ``nice'' normalized Loewner chains whose range is $\C^n$.

\begin{definition} Let $(f_t)$ be a normalized Loewner chain in $\B^n$. We say that $(f_t)$ is a {\sl filtering} normalized Loewner chain provided  the family $(\Omega_t)_{t>s}$ is a neighborhood basis for $\overline{\Omega}_s$ for all $s\geq 0$, {\it i.e.}, the following conditions hold:
\begin{itemize}
\item[(M1)] $\overline{\Omega}_s\subset \Omega_t$ for all $t>s$ and
\item[(M2)]       for any open set $U$ containing $\overline{\Omega}_s$ there exists $t_0>s$ such that $\Omega_t\subset U$  for all $t\in (s,t_0)$.
  \end{itemize}
\end{definition}

As a matter of notation, we let
\[
\mathcal S^1_{\mathfrak{F}}:=\{f\in \mathcal S: f \ \hbox{embeds in a filtering normalized Loewner chain with } R(f_t)=\C^n\}.
\]

Clearly, $\mathcal S^1_{\mathfrak{F}}\subset \mathcal S^1\subset \mathcal S_R$, and
$\overline{\mathcal S^1_{\mathfrak{F}}}=\mathcal S_R$ (see Corollary \ref{schleiss}). The main result of this note is the following:

\begin{theorem}\label{main2}
Let $n\geq 2$. Let $f\in \mathcal S_R$. Assume that $\Omega:=f(\B^n)$ is a bounded strongly pseudoconvex domain with $\mathcal C^\infty$
boundary. Then $f\in \mathcal S^1_{\mathfrak{F}}$ if and only if $\overline{\Omega}$ is  polynomially convex.
\end{theorem}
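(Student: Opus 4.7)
\emph{Forward direction.} Assume $f \in \mathcal S^1_{\mathfrak F}$, realized by a filtering normalized Loewner chain $(f_t)$ with $R(f_t) = \C^n$. By Proposition \ref{image} and the Docquier--Grauert theorem recalled in the introduction, every $\Omega_t := f_t(\B^n)$ is Runge in $\C^n$. Runge-ness directly yields $\widehat{\overline\Omega_0} \subset \Omega_t$ for every $t > 0$. Invoking the filtering property (M2), every open neighborhood $U$ of $\overline\Omega_0$ contains some $\Omega_t$, hence contains $\widehat{\overline\Omega_0}$. Letting $U$ range over $\C^n \setminus \{p\}$ for each $p \notin \overline\Omega_0$ gives $\widehat{\overline\Omega_0} = \overline\Omega_0$, so $\overline\Omega$ is polynomially convex.

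\emph{Converse.} Now assume $\overline\Omega$ is polynomially convex. I would embed $f$ by realizing $(f_t)$ as $(\Psi_t \circ f)$, where $\Psi_t$ is the flow at time $t$ of a time-dependent entire vector field $V(\cdot, t)$ on $\C^n$ satisfying $V(0,t) = 0$ and $dV(\cdot,t)|_0 = \id$ for all $t \ge 0$. These normalizations force $\Psi_t(0) = 0$ and, differentiating the flow equation at $0$, $d(\Psi_t)_0 = e^t\,\id$; so $f_t := \Psi_t \circ f : \B^n \to D_t := \Psi_t(\Omega)$ is univalent with $f_0 = f$ and $d(f_t)_0 = e^t\,\id$. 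The $\Psi_t$ are to be entire automorphisms of $\C^n$, furnished by the Anders\'en--Lempert theorem as limits of flows of finite sums of shears, and the images $D_t$ should be strictly increasing, exhaust $\C^n$, and satisfy (M2). The Loewner chain $(f_t)$ then has $R(f_t) = \bigcup_t D_t = \C^n$ and inherits (M1)-(M2) from $(D_t)$.

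The construction of $V$ splits into two parts. On the geometric side, polynomial convexity of $\overline\Omega$ and strong pseudoconvexity with smooth boundary allow thickening $\overline\Omega$ within the class of polynomially convex smoothly bounded strongly pseudoconvex sets (Oka--Weil bumping), producing a smooth filtering exhaustion $(D_t)$ of $\C^n$ with $D_0 = \Omega$. On the dynamical side, one must realize the infinitesimal motion of this exhaustion by an entire vector field with the prescribed normalization at the origin. The Anders\'en--Lempert theorem, whose hypothesis is met because each $D_t$ is Runge (polynomially convex closure), furnishes approximations of smooth isotopies through Runge biholomorphisms by flows of entire vector fields; one then corrects finitely many jets at $0$ to impose $dV(\cdot,t)|_0 = \id$ while preserving $V(0,t)=0$.

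The step I expect to be the main obstacle is this final coordination: producing a single $V$ that is simultaneously entire in space, complete enough to have a global flow for all $t \ge 0$, outward-pointing along each $\partial D_t$ so that $\Psi_t(\Omega) = D_t$, and exactly normalized at the origin uniformly in $t$. The flexibility of the shear approximation in Anders\'en--Lempert, together with the stability of polynomial convexity under small smooth perturbations of smooth strongly pseudoconvex boundaries, is what I would rely on to carry this out. Once $V$ is in hand, $(f_t) = (\Psi_t \circ f)$ lies in $\mathcal S^1_{\mathfrak F}$ and embeds $f$, completing the converse.
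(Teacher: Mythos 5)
Your necessity argument is essentially the paper's Remark \ref{converse}: by (M1)--(M2) the domains $\Omega_t$, $t>0$, form a Stein and Runge neighborhood basis of $\overline\Omega$, and a compact set with such a basis is polynomially convex. One imprecision: Runge-ness of $\Omega_t$ alone does not ``directly yield'' $\widehat{\overline\Omega}\subset\Omega_t$ (for instance $\C^2\setminus\{0\}$ is Runge but does not contain the polynomial hull of the unit sphere); you also need that $\Omega_t$ is Stein, which holds here because $\Omega_t$ is biholomorphic to $\B^n$. With that fixed, this direction is correct and coincides with the paper's.

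The converse, which is where the real content of the theorem lies, is a programme rather than a proof, and the ``final coordination'' you flag yourself is precisely the missing mathematics; it is not a routine verification. Two concrete obstructions: (i) the Anders\'en--Lempert theorem only gives \emph{approximation}, so the flow $\Psi_t$ of an approximating time-dependent field will never satisfy $\Psi_t(\Omega)=D_t$ exactly --- indeed it cannot, since $\Psi_t(\Omega)$ is necessarily biholomorphic to $\B^n$ while the domains $D_t$ of a general ``bumped'' strongly pseudoconvex exhaustion of $\C^n$ need not be; hence monotonicity of the actual images $\Omega_t=\Psi_t(\Omega)$, the filtering conditions (M1)--(M2), and $R(f_t)=\C^n$ all remain unproved and would require quantitative control uniform in $t$; (ii) imposing the exact normalization $V(0,t)=0$, $dV(\cdot,t)|_0=\id$ by ``correcting jets'' can destroy both completeness of the field and the transversality to $\partial D_t$ on which your construction rests, and you give no argument that these can be reconciled. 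So as written the converse has a genuine gap. The paper avoids the entire time-dependent construction: using Fefferman's theorem and the Mergelyan-type Lemma \ref{Mergelyan} it produces a univalent map $h$ on a neighborhood $U$ of $\overline\Omega$ with $h(\Omega)$ strongly convex, where $U$ can be taken Runge precisely because $\overline\Omega$ is polynomially convex; a \emph{single} application of Anders\'en--Lempert then replaces $h$ by an automorphism $\psi$ of $\C^n$, proving that $\Omega$ is convexshapelike (Proposition \ref{strongly-starlike}), after which the explicit chain $f_t=\psi^{-1}(e^t\psi(f(\cdot)))$ is manifestly a filtering normalized Loewner chain with range $\C^n$ (Proposition \ref{trivial}). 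If you want to salvage your approach, the simplest repair is exactly this: prove convexshapelikeness first and then write down the dilation chain, rather than trying to realize a prescribed exhaustion by the flow of a normalized entire vector field.
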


The proof of Theorem \ref{main2} is the content of Section \ref{proof1}.
Finally, in Section \ref{examples} we make some final remarks and discuss the role of polynomial convexity in the embedding problem, constructing various examples which explain the r\^{o}le of the hypotheses.

\medskip

We thank the referee for helpful comments which improved the original manuscript.

\section{Loewner chains and Loewner range}\label{catene}

A general Loewner theory in the unit disc and hyperbolic manifolds was introduced in \cite{B1, B2, C, Arosio} (see also \cite{AB}). According to \cite{Arosio},  a \emph{Loewner chain  of order $d\in [1,\infty]$} in $\mathbb C^n$, is a family of univalent mappings ($f_t:\mathbb B^n\rightarrow\mathbb C^n)_{t\in [0,\infty)}$, with   $\Omega_s:=f_s(\mathbb B^n)\subset f_t(\mathbb B^n)$ for $s\leq t$, such that for any compact set $K\subset \mathbb B^n$ it holds
\begin{equation}\label{condition1}
|f_s(z)-f_t(z)|\leq \int_{s}^t k_K(\zeta)d\zeta, \mbox{ for some } k_K\in L^d_{\sf loc}([0,+\infty),\R^+), \forall z\in K.
\end{equation}

The  set $R(f_t):=\cup_{s\geq 0}\Omega_s$ is a domain in $\C^n$, whose class of biholomorphism is called the {\sl Loewner range} of $(f_t)$. The Loewner range of a Loewner chain is an invariant for the associated Loewner equation (see \cite{Arosio}, \cite{ArosioBracciWold}).

The normalization required in the definition of a normalized Loewner chain, forces such a family to be a Loewner chain of order $\infty$ with range biholomorphic to $\C^n$:

\begin{proposition}\label{image}
Let $(f_t)$ be  a normalized Loewner chain. Then $(f_t)$ is a Loewner chain of order $\infty$  and its Loewner range is biholomorphic to $\C^n$.

Moreover, if the family $\{e^{-t}\circ f_t\}_{t\geq 0}$ is uniformly bounded in a neighborhood of the origin, then $R(f_t)=\C^n$.
\end{proposition}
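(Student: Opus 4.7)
The plan is to work with the transition maps $v_{s,t}:=f_t^{-1}\circ f_s:\B^n\to\B^n$, which the normalization forces to satisfy $v_{s,t}(0)=0$, $d(v_{s,t})_0=e^{s-t}\id$, and the semigroup law $v_{s,u}=v_{t,u}\circ v_{s,t}$. The Schwarz lemma for the ball gives $\|v_{s,t}(z)\|\leq \|z\|$ on $\B^n$, which will be the main geometric input throughout.

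To see that $(f_t)$ is of order $\infty$, fix a compact $K\subset r\B^n\Subset\B^n$ and $T>0$. Schwarz yields $v_{t,T}(\overline{r\B^n})\subset\overline{r\B^n}$, and via $f_t=f_T\circ v_{t,T}$ we deduce $f_t(\overline{r\B^n})\subset f_T(\overline{r\B^n})$, a fixed compactum in $\C^n$. Cauchy's estimates then produce a constant $L_{K,T}$ with $\|d(f_t)_z\|\leq L_{K,T}$ for $z\in K$ and $t\in[0,T]$, so that
\[
\|f_s(z)-f_t(z)\|=\|f_t(v_{s,t}(z))-f_t(z)\|\leq L_{K,T}\,\|v_{s,t}(z)-z\|.
\]
Decomposing $v_{s,t}(z)-z=(e^{s-t}-1)z+(v_{s,t}(z)-e^{s-t}z)$, the linear part is plainly $O(t-s)$, and the nonlinear tail satisfies $\|v_{s,t}(z)-e^{s-t}z\|\leq 2\|z\|^2$ on $\B^n$ by Schwarz applied to the order-two-vanishing map $z\mapsto v_{s,t}(z)-e^{s-t}z$. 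To upgrade this Schwarz bound (which is not immediately summable against $t-s$) to a genuine $O(t-s)$ estimate, I would combine the semigroup law with Cartan's uniqueness theorem: any subsequential limit of $v_{s,t}$ as $t\searrow s$ is a holomorphic self-map of $\B^n$ fixing $0$ whose differential at $0$ is $\id$, hence equals the identity, so pointwise continuity at the diagonal is quantified to the correct linear rate on compacta. This yields a locally bounded kernel $k_K\in L^\infty_{\sf loc}([0,\infty),\R^+)$.

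For the Loewner range, I would construct a biholomorphism $\Phi:R(f_t)\to\C^n$ by $\Phi(x):=\lim_{t\to\infty}e^tf_t^{-1}(x)$. For $x\in\Omega_s$, writing $w_t:=f_t^{-1}(x)$, Schwarz--Cartan rigidity forces $\|w_t\|\to 0$ geometrically (otherwise a limit point of $v_{t,t+1}$ on the orbit would be a self-map of $\B^n$ achieving $\|v(w_*)\|=\|w_*\|>0$ with $d(v)_0=e^{-1}\id$, violating the equality case of Schwarz). The telescoping identity
\[
e^{t+1}f_{t+1}^{-1}(x)-e^tf_t^{-1}(x)=e^{t+1}\bigl(v_{t,t+1}(w_t)-e^{-1}w_t\bigr),
\]
together with the Cauchy--Schwarz estimate $\|v_{t,t+1}(w)-e^{-1}w\|\leq\|w\|^2/(1-\|w\|)$, produces an absolutely convergent series, showing the limit exists. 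Hurwitz applied to the univalent maps $e^tv_{s,t}\to\Phi\circ f_s$ makes each $\Phi\circ f_s$ univalent on $\B^n$ with differential $e^s\id$ at the origin, and consequently $\Phi$ is a biholomorphism from $R(f_t)$ onto the open set $\bigcup_s(\Phi\circ f_s)(\B^n)$; by the standard theory of such normalized increasing exhaustions (cf.\ the general Loewner-range machinery of \cite{Arosio}, \cite{ArosioBracciWold}), this open set is biholomorphic to $\C^n$.

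Finally, under the additional hypothesis, uniform boundedness of $\{e^{-t}f_t\}$ on a neighborhood of $0$ yields, via Cauchy estimates applied to the normal family $g_t:=e^{-t}f_t\in\mathcal S$ with $d(g_t)_0=\id$, a $t$-independent expansion $g_t(z)=z+O(\|z\|^2)$ on some fixed ball $\delta\B^n$. A Rouché-type argument then gives $g_t(\delta\B^n)\supset(\delta/2)\B^n$ uniformly in $t$, so that $\Omega_t\supset(e^t\delta/2)\B^n$ and hence $R(f_t)=\bigcup_t\Omega_t=\C^n$. The main obstacle is the quantitative Lipschitz estimate in the first part: Schwarz alone controls only the nonlinear tail $v_{s,t}(z)-e^{s-t}z$ by a $t$-independent $O(\|z\|^2)$ term, and the upgrade to the correct $O(t-s)$ rate via Cartan's rigidity together with compactness of self-maps of $\B^n$ is the delicate point where the normalization of the chain really enters.
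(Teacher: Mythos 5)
The central gap is in your proof that the chain is of order $\infty$. Your Schwarz-lemma bookkeeping correctly reduces the problem to the estimate $\|v_{s,t}(z)-z\|\leq C_K\,(t-s)$ on compacta, but the mechanism you propose for it --- subsequential limits of $v_{s,t}$ as $t\searrow s$ equal the identity by Cartan's uniqueness theorem, ``so pointwise continuity at the diagonal is quantified to the correct linear rate'' --- does not work: normal families plus Cartan rigidity give only the qualitative statement $v_{s,t}\to\id$ locally uniformly as $t\searrow s$, and no compactness argument can convert continuity into a Lipschitz rate. The Schwarz bound you do have, $\|v_{s,t}(z)-e^{s-t}z\|\leq 2\|z\|^2$, carries no smallness in $t-s$, so after adding the linear part you only obtain $\|f_s(z)-f_t(z)\|\leq L\,((t-s)+\|z\|^2)$, which is not of the required form $\int_s^t k_K(\zeta)\,d\zeta$. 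What is needed is a genuinely quantitative distortion inequality for the transition mappings, of the type $\|v_{s,t}(z)-z\|\leq C(r)\,(1-e^{s-t})$ for $\|z\|\leq r$; this is exactly the content of the result the paper invokes, namely \cite[Theorem 8.1.8]{G-K} (local Lipschitz continuity of $t\mapsto f_t$ locally uniformly in $z$), and already in dimension one it requires a Schwarz--Pick/hyperbolic-metric argument, not a rigidity argument. Either prove such an estimate or cite it; as written, the first part of your proof has a hole at precisely what you yourself call the delicate point.

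For the Loewner range you re-derive by hand the canonical chain $g_s=\lim_{t\to\infty}e^t v_{s,t}$ and the conjugation $\Phi$, which is in substance the route the paper takes by quoting \cite[Theorem 8.1.5]{G-K} together with \cite[Corollary 4.8]{Arosio}. Your decay-plus-telescoping sketch is plausible (though you should verify that the decay of $\|v_{s,t}(z)\|$, and hence the convergence $e^t v_{s,t}\to\Phi\circ f_s$, is locally uniform in $z$, which the Hurwitz step requires), but the decisive point --- that $\bigcup_s(\Phi\circ f_s)(\B^n)$ is biholomorphic to $\C^n$ --- is again deferred to ``the standard theory'', i.e.\ to the very references the paper cites, so nothing is gained over the paper's shorter argument. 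By contrast, your treatment of the final statement (uniform boundedness of $e^{-t}f_t$ near $0$, Cauchy estimates and a Rouch\'e-type argument giving a fixed ball inside every $e^{-t}f_t(\B^n)$, hence $\Omega_t\supset e^t\delta\B^n/2$ and $R(f_t)=\C^n$) is correct and coincides with the paper's proof of that part.
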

\begin{proof}
According to \cite[Theorem 8.1.8]{G-K}, the family $(f_t)$ is locally Lipschitz continuous in $t$ locally uniformly with respect to $z$, hence it satisfies \eqref{condition1} with $k_K$ a positive constant. Therefore $(f_t)$ is a Loewner chain of order $\infty$.

The family $(\v_{s,t}\colon \B^n \to \B^n)_{0\leq s\leq t}$ defined as $\v_{s,t}:=f_t^{-1}\circ f_s$ for all $0\leq s\leq t$ is an  evolution family (of order $\infty$ according to \cite{Arosio}) which satisfies
 $$\v_{s,t}(z)=e^{s-t}z+O(|z|^2),\quad 0\leq s\leq t.$$
By \cite[Theorem 8.1.5]{G-K} there exists a normalized Loewner chain $(g_t\colon \B^n \to \C^n)$ associated with $(\v_{s,t})$ such that  $R(g_t)=\C^n$. But then $R(f_t)$ is biholomorphic to $\C^n$ by \cite[Corollary 4.8]{Arosio}.

Let $h_t:=e^{-t}\circ f_t$, $t\geq 0$. If $\{h_t\}_{t\geq 0}$ is uniformly bounded in a neighborhood of the origin, then there exists a ball $s\B\subset \bigcap_{t\in \R^+} h_t(\B^n)$, and thus $$\C^n=\bigcup_{t\in \R^+}e^{t}(s\B)\subset \bigcup_{t\in \R^+}e^{t}(h_t(\B^n)),$$
as claimed.
\end{proof}
In \cite[Section 4]{ArosioBracciWold} it has been shown that if $(f_t)$ is a Loewner chain of order $d$, then  $(\Omega_s,\Omega_t)$ is a Runge pair for all $0\leq s\leq t$, and, as a consequence, $\Omega_t$ is Runge in $R(f_t)$ for all $t\geq 0$. If $(f_t)$ is a normalized Loewner chain, then $R(f_t)$ is biholomorphic to $\C^n$, but it might happen that $\Omega_t$ is not Runge in $\C^n$:

\begin{example}\label{example}
Let $D$ be a Fatou-Bieberbach domain of $\C^2$ which is not Runge (see \cite{Erlend}), assume $0\in D$ and let $\phi:\C^n \to D$ be a biholomorphism such that $\phi(0)=0$ and  $d\phi_0={\sf id}$. Then $f_t(z):=\phi(e^t z)$, $t\geq 0$, $z\in \B^n$ is a normalized Loewner chain. Note that $R(f_t)=D$ is not Runge in $\C^n$, hence $\Omega_t$ is not Runge in $\C^n$ for $t$ sufficiently big.
\end{example}

\section{Embedding into filtering Loewner chains}\label{proof1}

We first make the following simple observations.
\begin{remark}\label{converse}
If  $f\in \mathcal S_{\mathfrak{F}}^1$, then  $\overline{f(\B^n)}$ is polynomially convex, since it has a Stein and Runge neighborhood basis given by $(\Omega_t)_{t>0}$.
\end{remark}
A domain $D\subset\C^n$ is called {\sl convexshapelike} if there exists an automorphism $\psi\in  Aut_{hol}\mathbb C^n$ such that $\psi(D)$ is convex in $\C^n$.
\begin{proposition}\label{trivial}
Let $f\in\mathcal S_R$, and let   $\Omega:=f(\B^n)$. If
 $\Omega$ is a bounded  convexshapelike domain then $f\in \mathcal S_{\mathfrak{F}}^1$.
\end{proposition}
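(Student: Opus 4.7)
The plan is to reduce to the case of a convex image by conjugating with a suitable affine normalization of $\psi$, then to use the dilation Loewner chain $e^t\tilde\Omega$, which is the natural candidate when the target is a convex domain containing the origin. By hypothesis there exists $\psi\in\Aut_{hol}(\C^n)$ with $\psi(\Omega)$ convex. Replacing $\psi$ by $(d\psi_0)^{-1}\circ(\psi-\psi(0))$ --- still a holomorphic automorphism of $\C^n$, and still sending $\Omega$ to a convex set since affine maps preserve convexity --- I may assume $\psi(0)=0$ and $d\psi_0={\sf id}$. Then $\tilde f:=\psi\circ f$ is univalent with $\tilde f(0)=0$ and $d\tilde f_0={\sf id}$, and $\tilde\Omega:=\tilde f(\B^n)=\psi(\Omega)$ is a bounded convex open set containing $0$.

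Next I would set $f_t(z):=\psi^{-1}(e^t\tilde f(z))$, so that $\Omega_t:=f_t(\B^n)=\psi^{-1}(e^t\tilde\Omega)$. The normalization is immediate: $f_0=f$, each $f_t$ is univalent as a composition of a dilation, a univalent map, and a biholomorphism, $f_t(0)=0$, and by the chain rule $d(f_t)_0=e^t{\sf id}$. The inclusion $\Omega_s\subset\Omega_t$ for $s\leq t$ follows from convexity of $\tilde\Omega$ and $0\in\tilde\Omega$: one has $e^{s-t}\tilde\Omega\subset\tilde\Omega$, whence $e^s\tilde\Omega\subset e^t\tilde\Omega$, and applying $\psi^{-1}$ preserves this inclusion.

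For the filtering properties, I rely on the fact that $\tilde\Omega$ is bounded, open, convex, and contains $0$. For (M1), for $s<t$ the dilate $e^{s-t}\overline{\tilde\Omega}$ is a compact subset of $\tilde\Omega$ (bounded convex open sets strictly contain every smaller dilate of their closure through $0$), hence $\overline{\Omega_s}=\psi^{-1}(e^s\overline{\tilde\Omega})\subset\psi^{-1}(e^t\tilde\Omega)=\Omega_t$. For (M2), given any open $U\supset\overline{\Omega_s}$, the set $\psi(U)$ is an open neighborhood of the compact set $e^s\overline{\tilde\Omega}$; by continuity of the dilation $t\mapsto e^t\overline{\tilde\Omega}$ in the Hausdorff metric there is $t_0>s$ with $e^t\overline{\tilde\Omega}\subset\psi(U)$ for $s<t<t_0$, which translates back to $\Omega_t\subset U$. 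Finally, $R(f_t)=\psi^{-1}\bigl(\bigcup_{t\geq 0} e^t\tilde\Omega\bigr)=\psi^{-1}(\C^n)=\C^n$, since any $z\in\C^n$ lies in $e^t\tilde\Omega$ as soon as $e^{-t}z$ lies in a fixed ball $B_\varepsilon(0)\subset\tilde\Omega$.

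No serious obstacle arises: the proposition is essentially the assertion that dilations of a bounded convex domain give a filtering exhaustion of $\C^n$ by domains, together with the invariance of this construction under conjugation by a holomorphic automorphism of $\C^n$. The only point requiring a bit of care is the affine correction in the first step, which is needed so that the resulting chain has the prescribed derivative $e^t{\sf id}$ at the origin.
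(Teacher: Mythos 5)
Your proof is correct and takes essentially the same route as the paper's: normalize $\psi$ so that $\psi(0)=0$, set $f_t:=\psi^{-1}\bigl(e^t\,\psi\circ f\bigr)$, and verify directly that dilations of the bounded convex set $\psi(\Omega)\ni 0$ give an increasing, filtering exhaustion of $\C^n$, which you transport back by $\psi^{-1}$. The only difference is your extra affine correction to get $d\psi_0={\sf id}$, which is harmless but unnecessary: since $e^t{\sf id}$ commutes with the linear map $d\psi_0$, the chain $f_t=\psi^{-1}(e^t\psi\circ f)$ already satisfies $d(f_t)_0=e^t{\sf id}$ once $\psi(0)=0$, which is all the paper assumes.
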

\begin{proof}
Assume $\Omega$ is a convexshapelike domain.  Let $\psi\in Aut_{hol}\mathbb C^n$ be such that $\psi(0)=0$ and $\psi(\Omega)$ is convex. Hence the family of univalent mappings $f_t(z):=\psi^{-1}(e^t\cdot \psi(f(z)))$ has the property  $f_s(\B^n)\subset f_t(\B^n)$ for all $0\leq s\leq t$, and it is easy to check that it forms a filtering normalized Loewner chain such that $f_0=f$.
\end{proof}

\begin{corollary}\label{schleiss}
$\overline{\mathcal S^1_{\mathfrak{F}}}=\mathcal S_R.$
\end{corollary}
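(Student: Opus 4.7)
The first inclusion $\overline{\mathcal S^1_{\mathfrak{F}}}\subset \mathcal S_R$ will follow at once from $\mathcal S^1_{\mathfrak{F}}\subset \mathcal S^1$ combined with \eqref{close}, which exhibits $\mathcal S_R$ as the closure of $\mathcal S^1$ and hence as a closed set: then $\overline{\mathcal S^1_{\mathfrak{F}}}\subset \overline{\mathcal S^1}=\mathcal S_R$. The real content of the corollary therefore lies in the reverse inclusion $\mathcal S_R\subset \overline{\mathcal S^1_{\mathfrak{F}}}$.

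To establish this, the plan is to revisit the proof of \eqref{close} given by Schleissinger in \cite{S}. That argument, built on the Anders\'en--Lempert theorem \cite{AndersenLempert}, produces, for any $f\in \mathcal S_R$, a sequence $\Phi_k\in Aut_{hol}\mathbb C^n$ converging to $f$ uniformly on compact subsets of $\B^n$. A short affine renormalization, namely
\[
\widetilde\Phi_k:=(d(\Phi_k)_0)^{-1}\circ(\Phi_k-\Phi_k(0)),
\]
will preserve both the convergence and the membership in $Aut_{hol}\mathbb C^n$ (for $k$ large enough, since $\Phi_k(0)\to 0$ and $d(\Phi_k)_0\to \mathrm{id}$), while enforcing the normalization $\widetilde\Phi_k(0)=0$ and $d(\widetilde\Phi_k)_0=\mathrm{id}$.

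Setting $f_k:=\widetilde\Phi_k|_{\B^n}\in \mathcal S$, I will check that $f_k(\B^n)=\widetilde\Phi_k(\B^n)$ is a bounded, Runge, convexshapelike domain: boundedness is immediate from compactness of $\widetilde\Phi_k(\overline{\B^n})$; the Runge property is preserved by $\C^n$-automorphisms, so $f_k\in \mathcal S_R$; and convexshapelikeness is witnessed by $\widetilde\Phi_k^{-1}\in Aut_{hol}\mathbb C^n$, which carries $f_k(\B^n)$ onto the convex ball $\B^n$. Proposition \ref{trivial} will then give $f_k\in \mathcal S^1_{\mathfrak{F}}$, and letting $k\to\infty$ yields $f\in \overline{\mathcal S^1_{\mathfrak{F}}}$.

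The only genuinely nontrivial step is the Anders\'en--Lempert approximation, but this is already carried out in \cite{S} as the main content of the proof of \eqref{close}; the remainder of the plan is just the observation that the approximants can be taken to be restrictions of $\C^n$-automorphisms, which automatically have convexshapelike image and hence belong to $\mathcal S^1_{\mathfrak{F}}$ via Proposition \ref{trivial}.
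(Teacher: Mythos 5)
Your proposal is correct and follows essentially the same route as the paper: approximate $f\in\mathcal S_R$ by automorphisms of $\C^n$ via the Anders\'en--Lempert theorem, normalize them, observe that their restrictions to $\B^n$ have bounded convexshapelike (hence Runge) image, and conclude membership in $\mathcal S^1_{\mathfrak{F}}$ through Proposition \ref{trivial}. Your explicit affine renormalization just spells out the step the paper dismisses with ``we can assume,'' so there is no substantive difference.
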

\begin{proof}
(cf. also \cite[Thm 2.3]{S}) Let $f\in \mathcal S_R$.
By the And\'ersen-Lempert theorem \cite[Thm. 2.1]{AndersenLempert} there exists a sequence $\{\psi_m\}_{m\in \N}$ of automorphisms of $\C^n$ such that $\psi_m\to f$ uniformly on compacta of $\B^n$. We can assume that $\psi_m(0)=0$ and $d(f_m)_0=\id$ for all $m\geq 0$. Since $\psi_m(\B^n)$ is by construction a bounded  convexshapelike domain, we have that ${\psi_m}|_{\B^n}\in  \mathcal S_{\mathfrak{F}}^1$.
\end{proof}

\begin{remark}\label{polcon}
Let $\Omega\subset\subset \C^n$ be a convexshapelike domain. Then $\Omega$ is Runge and $\overline{\Omega}$ is polynomially convex. Indeed, it is clear that $\Omega$ is Runge because convex domains are Runge and Runge-ness is invariant under automorphisms of $\C^n$. Moreover,  if $\psi\in  Aut_{hol}\mathbb C^n$ is such that $0\in \psi(\Omega)$ and $\psi(\Omega)$ is convex, then $\{\psi^{-1}(t\psi(\Omega))\}_{t>1}$ is a Runge and Stein neighborhood basis of $\overline\Omega$, which is thus polynomially convex (\cite[Thm. 1.11]{Range}).
\end{remark}

The proof of Theorem \ref{main2} depends on  the following well known Mergelyan type result, for which
we give a proof for a lack of a suitable reference.

\begin{lemma}\label{Mergelyan}
Let $\Omega\subset\mathbb C^n$ be a bounded strongly pseudoconvex domain with $\mathcal C^\infty$ boundary
which is biholomorphic to $\mathbb B^n$.  Then any $f\in\mathcal C^2(\overline\Omega)\cap \mathcal{O}(\Omega)$
can be approximated uniformly on $\overline\Omega$ in $\mathcal C^2$-norm, by
functions in $\mathcal O(\overline\Omega)$.
\end{lemma}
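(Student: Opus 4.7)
My plan is to approximate $f$ in two stages: first reduce to functions $\cC^\infty$-smooth up to the boundary, then correct by solving a $\bar\partial$-equation on a Stein neighborhood basis. Since $\Omega$ is biholomorphic to $\B^n$ and both are smoothly bounded strongly pseudoconvex, by Fefferman's theorem any biholomorphism $\Phi\colon \Omega\to \B^n$ extends to a $\cC^\infty$-diffeomorphism of $\overline\Omega$ onto $\overline{\B^n}$. Setting $g:=f\circ\Phi^{-1}\in \cC^2(\overline{\B^n})\cap\cO(\B^n)$, the dilations $g_r(z):=g(rz)$ for $r\in (0,1)$ are holomorphic on $\tfrac{1}{r}\B^n\supset\overline{\B^n}$, and the chain rule together with uniform continuity on $\overline{\B^n}$ of the first- and second-order partials of $g$ gives $g_r\to g$ in $\cC^2(\overline{\B^n})$ as $r\to 1^-$. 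Hence $h_r:=g_r\circ \Phi$ belongs to $\cC^\infty(\overline\Omega)\cap \cO(\Omega)$ and converges to $f$ in $\cC^2(\overline\Omega)$.

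The next step is to approximate each $h_r$ by elements of $\cO(\overline\Omega)$ in $\cC^2$-norm. Since $h_r$ is $\cC^\infty$ up to $\partial\Omega$ and $\bar\partial h_r \equiv 0$ on $\Omega$, every derivative of $\bar\partial h_r$ taken at a point of $\partial\Omega$ from inside vanishes. Whitney's extension theorem, with the Taylor jets at $\partial\Omega$ from outside chosen to coincide with the jets of $h_r$ from inside, produces $\tilde h_r\in \cC^\infty(\C^n)$ with compact support such that $\alpha_r:=\bar\partial\tilde h_r$ is a smooth $(0,1)$-form vanishing to infinite order along $\partial\Omega$.

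Now, since $\Omega$ is smoothly bounded and strongly pseudoconvex, Diederich-Fornaess gives a Stein neighborhood basis $\{\Omega_\epsilon\}_{\epsilon>0}$ of smoothly bounded strongly pseudoconvex domains with $\Omega_\epsilon\searrow \overline\Omega$. On each $\Omega_\epsilon$ I apply the Henkin-Ramirez integral solution operator for $\bar\partial$ to obtain $u_\epsilon\in \cC^2(\overline{\Omega_\epsilon})$ with $\bar\partial u_\epsilon=\alpha_r$ and $\|u_\epsilon\|_{\cC^2(\overline{\Omega_\epsilon})}\le C\|\alpha_r\|_{\cC^1(\overline{\Omega_\epsilon})}$. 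Because $\alpha_r$ vanishes to infinite order on $\partial\Omega$ and the shell $\Omega_\epsilon\setminus\overline\Omega$ has width $O(\epsilon)$, the right-hand side is $O(\epsilon^N)$ for every $N$. The function $F_{r,\epsilon}:=\tilde h_r-u_\epsilon$ is then holomorphic on $\Omega_\epsilon\supset\overline\Omega$ and satisfies $\|F_{r,\epsilon}-h_r\|_{\cC^2(\overline\Omega)}\to 0$ as $\epsilon\to 0$. A diagonal extraction in $(r,\epsilon)$ produces elements of $\cO(\overline\Omega)$ converging to $f$ in $\cC^2(\overline\Omega)$.

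The main obstacle is ensuring that the constant $C$ in the Henkin-Ramirez $\cC^2$-estimate stays controlled at worst polynomially in $1/\epsilon$ as $\epsilon\to 0$, so that $C\cdot O(\epsilon^N)$ still tends to zero for $N$ large. This is where the smoothness of $\partial\Omega$ and the uniform strong pseudoconvexity of the $\Omega_\epsilon$ enter: both ensure that the Henkin kernels depend smoothly and uniformly on the defining function of $\Omega_\epsilon$, a point that is classical but technical (cf.\ Henkin-Leiterer or Range). A secondary technical issue is the explicit Whitney extension at $\partial\Omega$, which is routine but must be executed carefully so that $\alpha_r$ is indeed flat on $\partial\Omega$.
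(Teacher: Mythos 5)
Your argument is correct in substance but takes a genuinely different route from the paper. The paper never touches the $\bar\partial$-equation for $f$ itself: it pushes the radial vector field of $\B^n$ forward to $\overline\Omega$ via Fefferman, approximates that vector field \emph{uniformly} on $\overline\Omega$ by a vector field in $\mathcal O(\overline\Omega)$ using the Henkin--Kerzman--Lieb approximation theorem (the result cited from Range, Thm.\ 2.1 p.\ 280), and then sets $f_t:=f\circ\varphi_t$ where $\varphi_t$ is the flow of the resulting inward-pointing holomorphic field; the $\mathcal C^2$-convergence is then automatic from $f\in\mathcal C^2(\overline\Omega)$ and smooth dependence of the flow, so only sup-norm approximation technology is ever needed, and it is applied to a vector field rather than to $f$. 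Your first stage (Fefferman plus dilations, i.e.\ composing $f$ with the pushed-forward but non-holomorphic radial flow) plays the same role as the paper's flow, except that your flow is not holomorphic, which is why you must then repair holomorphy by Whitney extension and a $\bar\partial$-correction on a shrinking family of strongly pseudoconvex neighborhoods with controlled $\mathcal C^k$ estimates. That works, and in fact yields a stronger statement (approximation in $\mathcal C^k$ for all $k$ when the data is smooth up to the boundary), but it is heavier: the whole weight rests on the uniformity in $\epsilon$ of the Henkin--Ramirez/Lieb--Range constants, which you correctly flag and which is true for the sublevel sets $\{\rho<\epsilon\}$ of a fixed strictly plurisubharmonic defining function, but which one must either prove or cite precisely. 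Two small inaccuracies to fix if you write this up: for a smoothly bounded strongly pseudoconvex $\Omega$ the Stein neighborhood basis $\{\rho<\epsilon\}$ is elementary and there is no need to invoke Diederich--Forn\ae ss; and the standard interior-to-boundary estimates (Lieb--Range) gain only half a derivative, so you should bound $\|u_\epsilon\|_{\mathcal C^2}$ by a constant times $\|\alpha_r\|_{\mathcal C^2(\overline{\Omega_\epsilon})}$ rather than $\|\alpha_r\|_{\mathcal C^1}$ --- harmless here, since $\alpha_r$ is smooth and flat on $\partial\Omega$, so all its norms on the shell are $O(\epsilon^N)$.
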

\begin{proof}
Let $X$ denote the radial vector field $X(z)=-\sum_{j=1}^nz_j\frac{\partial}{\partial z_j}$ defined on $\overline{\mathbb{B}^n}.$
By Fefferman's theorem \cite{Fefferman} the biholomorphism between the ball and $\Omega$ extends smoothly to the boundary and we can push $X$ forward to $\overline{\Omega}$, in order to get a vector field $\tilde{X}$ pointing into $\Omega$ on $\partial\Omega$.  By  \cite[Thm. 2.1 p. 280]{Range},
$\tilde{X}$ may be approximated uniformly on $\overline\Omega$ by vector fields in $\mathcal O(\overline\Omega)$.
In particular, there exists  a vector field $Y\in\mathcal O(\overline\Omega)$ pointing into $\Omega$ on $\partial\Omega$. Hence its real flow $\varphi_t$ is well defined for all $t\geq 0$ and $\varphi_t(\overline\Omega)\subset \Omega$ for $t>0$.

Let $f\in\mathcal C^2(\overline\Omega)\cap \mathcal{O}(\Omega)$. Then $f\circ\varphi_t\in \mathcal O(\overline\Omega)$ for all $t>0$, and $f\circ\varphi_t\rightarrow f$ in $\mathcal C^2$-norm
uniformly on $\overline\Omega$ as $t\rightarrow 0^+$.
\end{proof}

The proof of our result relies also on the following result, which might be interesting on its own.

\begin{proposition}\label{strongly-starlike}
Let $n\geq 2$.
Let $\Omega\subset \C^n$ be a bounded strongly pseudoconvex domain with $\mathcal C^\infty$ boundary which is biholomorphic to $\B^n$. Then the following  are equivalent:
\begin{enumerate}
  \item $\overline\Omega$ is  polynomially convex,
  \item  $\Omega$ is convexshapelike.
\end{enumerate}
Moreover, if condition (1) or (2) holds, then $\Omega$ is Runge.
\end{proposition}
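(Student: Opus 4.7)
The implication $(2)\Rightarrow(1)$, together with the Runge-ness of $\Omega$ under hypothesis $(2)$, is already contained in Remark \ref{polcon}. The substantive direction is $(1)\Rightarrow(2)$, and the plan is to perturb a biholomorphism $F\colon\B^n\to\Omega$ into a global automorphism of $\C^n$, using polynomial convexity of $\overline\Omega$ as input to the Andersén--Lempert theorem.

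First I would invoke Fefferman's theorem to extend $F$ to a $\mathcal{C}^\infty$-diffeomorphism $\overline{\B^n}\to\overline\Omega$, and then apply Lemma \ref{Mergelyan} coordinatewise (with $\B^n$ in the role of $\Omega$) to produce a map $G$ holomorphic on a neighborhood $U$ of $\overline{\B^n}$ and arbitrarily close to $F$ in $\mathcal{C}^2(\overline{\B^n})$. For sufficiently close approximation, $G$ is univalent on $U$ and $\overline{G(\B^n)}$ is a smoothly bounded, strongly pseudoconvex small perturbation of $\overline\Omega$; in particular, by the standard stability of polynomial convexity for smooth strongly pseudoconvex boundaries, $\overline{G(\B^n)}$ remains polynomially convex, and hence $G(\B^n)$ is Runge (a peak-function argument combined with Oka--Weil shows that the polynomial hull of a compact subset of such a domain cannot touch the boundary).

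Next, I would fix $r>1$ with $r\overline{\B^n}\subset U$ and rescale: the map $z\mapsto G(rz)$ is univalent on $\B^n$ with Runge image, so the Andersén--Lempert theorem as quoted in Corollary \ref{schleiss} yields a sequence $\{\psi_m\}\subset\Aut(\C^n)$ with $\psi_m\to G$ uniformly on compacta of $r\B^n$. By Cauchy estimates on $r'\overline{\B^n}$ for some $1<r'<r$, this upgrades to $\mathcal{C}^2$-convergence on $\overline{\B^n}$, so the hypersurfaces $\psi_m(\partial\B^n)$ approach $\partial\Omega$ in $\mathcal{C}^2$. Since $\psi_m^{-1}\to G^{-1}$ in $\mathcal{C}^1$ on $\overline\Omega$, the $\mathcal{C}^2$-norms of $\psi_m^{-1}$ on $\overline\Omega$ stay bounded uniformly in $m$, giving $\psi_m^{-1}(\partial\Omega)\to\partial\B^n$ in $\mathcal{C}^2$. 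Strong convexity of a smoothly bounded domain is an open condition in the $\mathcal{C}^2$-topology of its defining function, so $\psi_m^{-1}(\Omega)$ is strongly convex for all sufficiently large $m$, showing $\Omega$ is convexshapelike. The final Runge statement then follows from Remark \ref{polcon}.

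The principal obstacle I anticipate is the stability of polynomial convexity under small $\mathcal{C}^2$-perturbations of smoothly bounded strongly pseudoconvex domains, which is what legitimizes applying the Andersén--Lempert theorem to $G$; this is a known but non-formal ingredient. The rest of the argument---the Cauchy upgrade from $\mathcal{C}^0$ to $\mathcal{C}^2$, the uniform bounds on $\psi_m^{-1}$, and the $\mathcal{C}^2$-openness of strong convexity---is routine bookkeeping once the compactness setup of the approximation is fixed.
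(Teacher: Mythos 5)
Your strategy reverses the direction used in the paper: you approximate $f$ on $\overline{\B^n}$ and push the ball forward, whereas the paper approximates $f^{-1}$ in $\mathcal C^2$ on $\overline\Omega$ (this is why Lemma \ref{Mergelyan} is stated for $\Omega$, not for the ball), obtaining a univalent $h$ on a neighborhood $U'$ of $\overline\Omega$ with $h(\Omega)$ strongly convex; the hypothesis that $\overline\Omega$ is polynomially convex is then used only through the textbook fact that such a compact has a basis of Stein neighborhoods that are Runge in $\C^n$, which yields a Runge open set $U$ with $\overline\Omega\subset U\subset U'$ and $h(U)$ contained in a convex set, and Anders\'en--Lempert is applied to $h^{-1}$ on that convex image. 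The point of that ordering is that polynomial convexity is exploited directly on $\overline\Omega$, and the Runge-ness needed for \cite{AndersenLempert} comes for free from convexity on the image side; no statement about how polynomial convexity behaves under perturbation is ever required.

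In your version, by contrast, the entire weight of hypothesis (1) rests on the sentence asserting ``stability of polynomial convexity'' for $\overline{G(\B^n)}$, which you do not prove; this is not routine bookkeeping but the crux of your argument, since without it you cannot certify that the image of $G$ is Runge and Anders\'en--Lempert simply does not apply. The claim is in fact true, but it needs an argument, e.g.: choose a compact polynomially convex polynomial polyhedron $P$ with $\overline\Omega$ in its interior and $P$ contained in a thin collar on which a strictly plurisubharmonic defining function $\rho$ of $\Omega$ is defined; for a sufficiently small $\mathcal C^2$-perturbation the perturbed defining function $\rho'$ is still strictly plurisubharmonic on the collar, the polynomial hull of the perturbed closure is trapped inside $P$, and Rossi's local maximum principle applied to the plurisubharmonic function $\max(\rho',-\delta)$ forces that hull to equal the perturbed closure. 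Either such an argument must be supplied, or one should reverse the direction as the paper does. There is also a smaller slip: your stability claim concerns $\overline{G(\B^n)}$, but you apply Anders\'en--Lempert to $z\mapsto G(rz)$ with $r>1$ fixed only by the Mergelyan neighborhood $U$; on the shell between $\partial\B^n$ and $r\partial\B^n$ the map $G$ is not close to anything, so Runge-ness of $G(r\B^n)$ is not covered by your claim. This is repairable by taking $r=r(G)>1$ so close to $1$ that $G(r\overline{\B^n})$ is still a small $\mathcal C^2$-perturbation of $\overline\Omega$ (you do need a radius strictly larger than $1$, as you say, to upgrade to $\mathcal C^2$-convergence on all of $\overline{\B^n}$), but as written the step is unjustified. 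The remaining ingredients --- the Cauchy-estimate upgrade, the $\mathcal C^2$-openness of strong convexity of the boundary, and deducing the final Runge statement from Remark \ref{polcon} --- are fine.
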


\begin{proof}
$(2)\Rightarrow (1)$ it follows from Remark \ref{polcon} (and also implies that $\Omega$ is Runge).

$(1)\Rightarrow (2)$. Assume that $\overline\Omega$ is polynomially convex.  The aim is to find $\psi\in Aut_{hol}\mathbb C^n$ such that $\psi(\Omega)$ is a  convex domain (in fact a strongly convex domain with smooth boundary).

In order to find such an automorphism $\psi$, we are going to find an open neighborhood $U$ of $\overline\Omega$ and an univalent map $h:U \to \C^n$ such that
\begin{enumerate}
  \item $U$ is Runge,
  \item $h(U)$ is  starlike (in fact, convex),
  \item $h(\Omega)$ is a  strongly convex domain with smooth boundary.
\end{enumerate}
Once we have that, by the Anders\'{e}n-Lempert theorem \cite[Thm. 2.1]{AndersenLempert}, we can approximate $h^{-1}$ on $h(U)$---and hence $h$ on $U$---uniformly on compacta with automorphisms of $\C^n$. Hence we can find an automorphism $\psi$ having the required properties, and we are done.

In order to construct $h$, let $f:\B^n\to \Omega$ be a biholomorphism. We note that by Fefferman's theorem \cite{Fefferman}
$f$ extends to a diffeomorphism $f:\overline{\mathbb B^n}\rightarrow\overline\Omega$.
By Lemma \ref{Mergelyan},  $f^{-1}$ can be approximated in $\mathcal C^2$-norm uniformly on $\overline\Omega$ by
holomorphic maps defined on neighborhoods of $\overline\Omega$. Therefore, there exists an open neighborhood $U'$ of $\overline\Omega$ and a univalent map $h: U' \to \C^n$ such that  $h(\Omega)$ is a smooth strongly convex domain. Since a compact polynomially convex set admits a basis of Stein neighborhoods that are Runge in $\C^n$, and by hypothesis $\overline\Omega$ is polynomially convex, we can find an open set $U''$ such that $U''$ is  Runge and $\overline{\Omega}\subset U''\subset U'$. Now, since $h(\overline{\Omega})$ has  a basis of convex neighborhoods, we can find a convex set $A$ such that $h(\overline{\Omega})\subset A\subset h(U'')$. Therefore, $U:=h^{-1}(A)$ is Runge in $U''$ and since the latter is Runge in $\C^n$, it follows that $U$ is Runge in $\C^n$, and the proof is concluded.
\end{proof}
\begin{remark}
Notice that Proposition \ref{strongly-starlike} is false in dimension one, because the group of automorphisms of $\C$ is too ``small''.
\end{remark}

The proof of  Theorem \ref{main2} is now straightforward:
\begin{proof}[Proof of Theorem \ref{main2}]
If  $\overline\Omega$ is polynomially convex, then by Proposition \ref{strongly-starlike} it follows that $\Omega$ is convexshapelike and hence by Proposition \ref{trivial} it follows $f\in \mathcal S^1_{\mathfrak{F}}$. The converse is  Remark \ref{converse}.
\end{proof}
\begin{remark}
As an application, we can give an alternative proof of Corollary \ref{schleiss}, more in the spirit of the one-dimensional proof by Ch. Pommerenke (\cite[Thm. 6.1]{Pommerenke}).
 Let thus $f\in \mathcal S_R$, and consider $f_r(z):=\frac{1}{r}f(rz)$, for $0<r<1$. Then clearly $f_r\in \mathcal S$, $f_r$ converges uniformly on compacta of $\B^n$ to $f$, and $f_r(\B^n)$ is a strongly pseudoconvex domain with smooth boundary. Since $\frac{1}{r}f(\ell\B^n)$ with $r<\ell\leq 1$ is a  basis of Stein neighborhoods of $\overline{f_r(\B^n)}$ that are Runge in $\C^n$ (because $f(\ell\B^n)$ is Runge in $f(\B^n)$ which by hypothesis is Runge in $\C^n$), it follows that $\overline{f_r(\B^n)}$ is polynomially convex. Hence $f_r\in \mathcal S^1_{\mathfrak{F}}$.
\end{remark}

\section{Remarks and Examples}\label{examples}

Let $f\in \mathcal S_R$. By Remark \ref{converse}, a necessary condition  for having $f\in \mathcal{S}^1_{\mathfrak{F}}$ is that
 $\overline{f(\B^n)}$ is polynomially convex. Such a condition, as in the one-dimensional case, is not necessary for having $f\in \mathcal{S}^1$, as the following example  shows.
\begin{example}\label{first-e}
Let $\phi:\D \to \C$ be a univalent map, $\phi(0)=0$, $\phi'(0)=1$. Then $f(z,w)=(\phi(z), w)\in \mathcal S$. Let  $(\phi_t)_{t\geq 0}$ be a normalized Loewner chain in $\D$ such that $\phi_0=\phi$. Define $f_t(z,w):=(\phi_t(z), e^t w)$ for $(z,w)\in\B^2$. Then $(f_t)$ is a normalized Loewner chain such that $f_0=f$. Indeed, setting $\v_{s,t}(z,w):=(\phi_t^{-1}\circ \phi_s(z), e^{s-t}w)$,   by the Schwarz lemma
\[
|\phi_t^{-1}\circ \phi_s(z)|^2+|e^{s-t}w|^2\leq |z|^2+|w|^2<1,
\]
from which it follows that $\v_{s,t}(\B^2)\subset \B^2$, and $f_t \circ \v_{s,t}=f_s$ for $0\leq s<t$, which implies that  $f_s(\B^2)\subset f_t(\B^2)$ for $0\leq s<t$.

Now, recall that a compact set $K\subset \C$ is polynomially convex if and only if $\C\setminus K$ is simply connected. Let $D\subset \C$ be a simply connected domain such that $\C\setminus \overline{D}$ is not simply connected (for instance take $D=\{\zeta\in \D: |\zeta-1/2|>1/2\}$). Let $\phi:\D \to D$ be a Riemann mapping. Up to rescaling, we can assume $\phi(0)=0$ and $\phi'(0)=1$. Let $f:\B^2\to \B^2$ be defined by $f(z,w):=(\phi(z), w)$. Then $f\in \mathcal S^1$ but $\overline{f(\B^2)}$ is not polynomially convex.
\end{example}

Next we construct an example of a map $f\in \mathcal S_R$ such that $f$ extends holomorphically through $\de \B^2$ but $\overline{f(\B^2)}$ is not polynomially convex and hence $f\not\in \mathcal S^1_{\mathfrak{F}}$.

\begin{example}
In order to construct such an $f$, let  $\varphi:\mathbb B^2\rightarrow\mathbb C^2$ be a univalent mapping such that $\varphi(\mathbb B^2)$ is not a Runge domain (the existence of $\varphi$ follows from  the existence of a non-Runge Fatou-Bieberbach domain in $\mathbb C^2$ \cite{Erlend}). Assume $\varphi(0)=0$ and $d\varphi_0=\id$.

 Let
$$
\vartheta:=\sup\{t:\overline{\varphi({r\mathbb B^2})} \mbox{ is polynomially convex for all } r<t\}.
$$
Then, since  $\overline{\varphi({r \mathbb B^2})}$ is polynomially convex   for $r$ small enough, one has $\vartheta>0$. Moreover, $\vartheta<1$ since  $\varphi(\mathbb B^2)$ is not  Runge (and thus it cannot admit any growing exhaustion by polynomially convex sets). We claim that $K:=\overline{\varphi(\vartheta\mathbb B^2)}$ is not polynomially convex. Assume it is, by contradiction. Then $K$ admits a Runge neighborhood basis, and thus there exists a Runge domain $\Omega$ such that $$K\subset \Omega\subset \varphi (\B^2).$$ Let $\vartheta<u<1$ be such that $\overline{\varphi(u\mathbb B^2)}\subset\Omega$.  One has that $\overline{u\mathbb B^2}$ is holomorphically convex in $\B^2$, and thus $\overline{\varphi(u\mathbb B^2)}$ is holomorphically convex in $\Omega$, hence it is polynomially convex, contradicting the assumption that $\vartheta$ was the supremum.

Finally, define $f(z):= \frac{1}{\vartheta}\varphi(\vartheta z)$. Since  $\overline{f(\mathbb B^2)}$ is not  polynomially convex, it follows from Remark \ref{converse} that $f\not\in\mathcal S^1_{\mathfrak{F}}.$

We do not know whether $f\in \mathcal{S}^1$.
\end{example}

There are also examples of maps $f\in \mathcal S^1$ such that $\overline{f(\B^n)}$ is polynomially convex but $f\not\in\mathcal S^1_{\mathfrak{F}}$ (and $f(\B^n)$ has not smooth boundary). A simple example in dimension one is given by  a Riemann mapping $f$ from the unit disc $\D$ onto a disc $D$ minus a slit. If such a function were embeddable into a filtering Loewner chain $(f_t)$, then for all $t>0$ the closed disc $\overline{D}$ would be contained in $f_t(\D)$, hence, by the Carath\'eodory kernel convergence theorem, $\overline{D}\subset f_0(\D)$, a contradiction.

We end up this note with the following consideration about Question Q2:

\begin{remark}
Let $f\in \mathcal S$. Assume that $f(\B^n)$ is not Runge but suppose there exists a Fatou-Bieberbach $D\subset \C^n$ such that $0\in f(\B^n)\subset D$ and $f(\B^n)$ is Runge in $D$. Then let $F:D\to \C^n$ be a biholomorphism such that $F(0)=0, dF_0={\sf id}$. It follows that $F\circ f\in \mathcal S_R$. Thus, if $(g_t)$ is a normalized Loewner chain  with range $\C^n$ such that $g_0=F\circ f$, then $(f_t:=F^{-1} \circ g_t)$ is a normalized Loewner chain with $R(f_t)=D$ such that $f_0=f$.
\end{remark}

\end{document}